\let\save@mathaccent\mathaccent
\newcommand*\if@single[3]{%
	\setbox0\hbox{${\mathaccent"0362{#1}}^H$}%
	\setbox2\hbox{${\mathaccent"0362{\kern0pt#1}}^H$}%
	\ifdim\ht0=\ht2 #3\else #2\fi
}
\newcommand*\rel@kern[1]{\kern#1\dimexpr\macc@kerna}
\newcommand*\widebar[1]{\@ifnextchar^{{\wide@bar{#1}{0}}}{\wide@bar{#1}{1}}}
\newcommand*\wide@bar[2]{\if@single{#1}{\wide@bar@{#1}{#2}{1}}{\wide@bar@{#1}{#2}{2}}}
\newcommand*\wide@bar@[3]{%
	\begingroup
	\def\mathaccent##1##2{%
		\let\mathaccent\save@mathaccent
		\if#32 \let\macc@nucleus\first@char \fi
		\setbox\z@\hbox{$\macc@style{\macc@nucleus}_{}$}%
		\setbox\tw@\hbox{$\macc@style{\macc@nucleus}{}_{}$}%
		\dimen@\wd\tw@
		\advance\dimen@-\wd\z@
		\divide\dimen@ 3
		\@tempdima\wd\tw@
		\advance\@tempdima-\scriptspace
		\divide\@tempdima 10
		\advance\dimen@-\@tempdima
		\ifdim\dimen@>\z@ \dimen@0pt\fi
		\rel@kern{0.6}\kern-\dimen@
		\if#31
		\overline{\rel@kern{-0.6}\kern\dimen@\macc@nucleus\rel@kern{0.4}\kern\dimen@}%
		\advance\dimen@0.4\dimexpr\macc@kerna
		\let\final@kern#2%
		\ifdim\dimen@<\z@ \let\final@kern1\fi
		\if\final@kern1 \kern-\dimen@\fi
		\else
		\overline{\rel@kern{-0.6}\kern\dimen@#1}%
		\fi
	}%
	\macc@depth\@ne
	\let\math@bgroup\@empty \let\math@egroup\macc@set@skewchar
	\mathsurround\z@ \frozen@everymath{\mathgroup\macc@group\relax}%
	\macc@set@skewchar\relax
	\let\mathaccentV\macc@nested@a
	\if#31
	\macc@nested@a\relax111{#1}%
	\else
	\def\gobble@till@marker##1\endmarker{}%
	\futurelet\first@char\gobble@till@marker#1\endmarker
	\ifcat\noexpand\first@char A\else
	\def\first@char{}%
	\fi
	\macc@nested@a\relax111{\first@char}%
	\fi
	\endgroup
}
\renewcommand*\env@matrix[1][*\c@MaxMatrixCols c]{%
	\hskip -\arraycolsep
	\let\@ifnextchar\new@ifnextchar
	\array{#1}}
\def\thm@space@setup{%
	\thm@preskip=\parskip \thm@postskip=0pt
}
\newcolumntype{M}[1]{>{\centering\arraybackslash}m{#1}}
\lstdefinestyle{Python}{
	language        = Python,
	frame           = lines, 
	basicstyle      = \footnotesize,
	keywordstyle    = \color{blue},
	stringstyle     = \color{green},
	commentstyle    = \color{red}\ttfamily
}
\newcommand{\Z}{\mathbb{Z}}
\newcommand{\N}{\mathbb{N}}
\newcommand{\Homology}{\ensuremath{{\sf{H}}}}
\newcommand{\Chain}{\ensuremath{{\sf{C}}}}
\newcommand{\bcap}{\bigcap}
\newcommand{\sseq}{\subseteq}
\theoremstyle{definition}
\newtheorem{theorem}{Theorem}[section]
\theoremstyle{definition}
\newtheorem{lem}[theorem]{Lemma}
\theoremstyle{definition}
\theoremstyle{definition}
\newtheorem{prop}[theorem]{Proposition}
\theoremstyle{definition}
\newtheorem{cor}[theorem]{Corollary}
\theoremstyle{remark}
\newtheorem*{rem}{Remark}
\theoremstyle{definition}
\newtheorem{defin}[theorem]{Definition}
\DeclareMathOperator*{\bigast}{\raisebox{-0.8ex}{\scalebox{2.5}{$\ast$}}}
\newcommand{\s}{\mathfrak{s}}
\renewcommand{\t}{\mathfrak{t}}
\newcommand{\lfrf}[1]{\lfloor #1 \rfloor}
\author[H.~K.~Chong]{Hip Kuen Chong}
\address{Dept. of Math. \& Stats.\\
	McGill Univ. \\
	Montreal, QC, Canada H3A 0B9 }
\email{chonghk1997@gmail.com}
\author[D.~T.~Wise]{Daniel T. Wise}
\email{wise@math.mcgill.ca}
\subjclass[2020]{20E26} 
\keywords{Residually finite, graphs of groups}
\date{July 27, 2021}
\thanks{Research supported by NSERC}
\title{An Uncountable Family of Finitely Generated Residually Finite Groups}
\begin{document}

\begin{abstract}
	We study a family of finitely generated residually finite groups.  These groups are doubles $F_2*_H F_2$ of a rank-$2$ free group $F_2$ along an infinitely generated subgroup $H$.  Varying $H$ yields uncountably many groups up to isomorphism. 
\end{abstract}

\maketitle


\section{Introduction}

A group $G$ is \emph{residually finite} if for each $g\neq 1$, there is a finite quotient $G\to \widebar{G}$ with $\widebar{g}\neq1$.  
Residually finite groups form a privileged and arguably rare class of groups.  It is a routine exercise in small cancellation theory to provide uncountably many isomorphism classes of $2$-generated groups.  For instance, consider the following presentation where $(p_i)$ is a sequence of distinct primes $\geq 7$.  
$$\langle a,b\mid {\left(abab^2\cdots ab^{100i}\right)}^{p_i}\colon i\in \N\rangle$$ 
Since the degrees of torsion element of the group are precisely the elements of $\{p_i\}$, these uncountably many groups are pairwise non-isomorphic.

This paper emerges from our curiosity to produce uncountably many finitely generated residually finite groups.  The family of examples we constructed have a very simple structure: they are all doubles $F*_H F$ of a free group $F$.  Hence, they are particularly familiar examples in combinatorial group theory satisfying many routine properties (e.g.\ torsion-free, cohomological dimension~$2$, fundamental groups of non-positively curved spaces). Though our groups have a very simple structure,
and form a very flexible family, we are not the first to wander in this direction.  

Grigorchuk's family of \emph{Grigorchuk groups} \cite{MR764305} provide uncountably many isomorphism classes of residually finite torsion groups.  This has been revisited in \cite{Benli2015} where uncountably many just-infinite branch pro-$2$ groups are provided.
There are even earlier uncountable classes, again having torsion, which we discuss below.

There is an interesting interplay between residual finiteness and the word problem.  Dyson observed that every finitely presented residually finite group has solvable word problem \cite{DysonNotice}.  On the other hand, Higman showed that every finitely generated recursively presented group embeds as a subgroup of a finitely presented group \cite{MR130286}.  Higman's embedding theorem demonstrates that there are finitely presented groups with unsolvable word problem, as there are finitely generated recursively presented groups having unsolvable word problem.  

It is natural to attempt to relax the finite presentability hypothesis in Dyson's result.  Following Higman's construction, Meskin gave an example of a finitely generated residually finite group with unsolvable word problem \cite{MR335645}.  Around the same time, Dyson constructed a concrete family of examples of finitely generated residually finite groups with unsolvable word problem \cite{MR0360843}.    Rauzy  used Dyson's examples to prove that Higman's embedding theorem cannot be extended to the category of finitely generated residually finite groups with solvable word problem \cite{rauzy2020obstruction}.       
The example we constructed can be recursively presented but still have unsolvable word problem.  We suspect that Dyson's uncountable family actually provides uncountably many isomorphism classes.  
Dyson's examples are also doubles $L*_H L$, but of a lamplighter group instead of a free group.  Hence, they are rich in torsion like Grigorchuk's examples.  

By controlling subgroup growth, Pyber produced uncountably many pairwise non-isomorphic 4-generator residually finite groups with isomorphic profinite completions \cite{MR2031168}.  By counting irreducible representations, Kassabov{\textendash}Nikolov also produced an uncountable family of pairwise non-isomorphic residually finite groups \cite{MR2264720}.  More recently, 
Segal provides a center-by-metabelian family, and Nikolov{\textendash}Segal provide uncountably many non-isomorphic length-$4$ soluble groups that have the same profinite completion \cite{SegalCentralMetababelian,NikolovSegalSoluble}.  


Section~\ref{sec:Gs} describes the groups $G_\s$. 
Section~\ref{sec:rf} proves they are residually finite.
Section~\ref{sec:homology} computes  homologies of certain quotients of $G_\s$.
Section~\ref{sec:nonisomorphic} uses this to show that these groups are pairwise non-isomorphic.
Section~\ref{sec:wordproblems} notes the simple relationship to unsolvable word and membership problems.


\section{The groups $G_\s$}\label{sec:Gs}

Here we describe our family of finitely generated groups $G_{\s}$.

\begin{defin}
	Let $F$ be a group with an isomorphic copy $\widebar{F}$.  Let $H\leq F$ and $\widebar{H}\leq \widebar{F}$ be corresponding subgroups.  The \emph{double of $F$ along $H$} is the amalgamated product $G \ = \ F*_{H= \widebar{H}}\widebar{F}$.
\end{defin}


\begin{defin}
	A sequence $\s=(s_n)_{n=0}^\infty$ is \emph{multiplying} if $s_n$ and $s_{n+1} / s_n$ are positive integers for all $n$, and $\lim s_n = \infty$.
\end{defin}

Let $\s$ be multiplying.
Let $F=\langle a,b\rangle$ be a rank $2$ free group.  Let $H_{\s}\leq F$ be the subgroup $H_{\s}=\langle b^na^{s_n}b^{-n}:n\geq 0\rangle$.  We define $G_{\s}$ as the double of $F$ along $H_{\s}$: 
$$G_{\s} \ = \ F *_{H_{\s} = \widebar{H}_{\s}} \widebar{F}.$$

\section{Residual Finiteness of $G_\s$}\label{sec:rf}
In this section we prove residual finiteness of $G_\s$ by generalizing \cite[Lem.2]{lemmaproof}.

A subgroup $H \leq G$ is \textit{closed} (in the profinite topology) if $H$ is the intersection of finite index subgroups of $G$.  For instance, $\{1\}\sseq G$ is closed iff $G$ is residually finite.  We use below that every finitely generated subgroup of a free group $F$ is closed \cite[Thm.5.1]{Marshallsubgroup}.  In particular, $F$ is residually finite. 
Lemma~\ref{doublerf} is used to prove that $G_{\s}$ is residually finite. The proof of Lemma~\ref{doublerf} is a special case of a result of G. Baumslag, see for instance, \cite{lemmaproof}.  

\begin{lem}\label{doublerf}
	Let $H$ be a closed subgroup of a residually finite group $F$.  Then the double $F*_{H= \widebar{H}}\widebar{F}$ is also residually finite.
\end{lem}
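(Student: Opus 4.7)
Fix the distinguished isomorphism $\phi\colon F\to \widebar{F}$ realizing the identification $H=\widebar{H}$. For any normal subgroup $N \trianglelefteq F$, set $\widebar{N}:=\phi(N)\trianglelefteq \widebar{F}$; then $\phi$ induces a compatible isomorphism $HN/N \cong \widebar{H}\widebar{N}/\widebar{N}$, so we may form the amalgamated product
$$\widebar{G}\ :=\ (F/N)\ *_{HN/N\,=\,\widebar{H}\widebar{N}/\widebar{N}}\ (\widebar{F}/\widebar{N})$$
together with a natural quotient homomorphism $G \twoheadrightarrow \widebar{G}$. The strategy is: given $1\neq g\in G$, choose $N$ of finite index so that the image $\widebar{g}$ of $g$ in $\widebar{G}$ is non-trivial, and then separate $\widebar{g}$ from $1$ using a further finite quotient of $\widebar{G}$.

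Write $g$ in reduced normal form $g=x_1x_2\cdots x_n$. If $n=1$, then $g\in F$ or $g\in \widebar{F}$, and residual finiteness of $F$ (equivalently $\widebar{F}$) supplies a finite-index normal $N$ with $g\notin N$. If $n\geq 2$, the syllables alternate between $F\setminus H$ and $\widebar{F}\setminus \widebar{H}$. For each syllable $x_i\in F\setminus H$, closedness of $H$ in $F$ furnishes a finite-index normal $N_i\trianglelefteq F$ with $x_i\notin HN_i$; for each syllable $x_j\in \widebar{F}\setminus \widebar{H}$, apply the symmetric statement in $\widebar{F}$ (which is residually finite and contains the closed subgroup $\widebar{H}$ via $\phi$) and pull back the resulting finite-index normal subgroup to $F$ through $\phi^{-1}$. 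Let $N$ be the intersection of this finite collection of finite-index normal subgroups of $F$. Then in $\widebar{G}$, the image of each syllable remains outside the edge subgroup, so the normal form theorem for amalgamated products yields $\widebar{g}\neq 1$.

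Finally, $\widebar{G}$ is the amalgamated product of two finite groups over a common finite subgroup. Its Bass-Serre tree therefore has finite vertex and edge stabilizers, so $\widebar{G}$ is virtually free and in particular residually finite. Choosing a finite quotient of $\widebar{G}$ that does not kill $\widebar{g}$ and composing with $G\twoheadrightarrow \widebar{G}$ yields the desired finite quotient of $G$. The main subtle step is the case $n\geq 2$, where closedness of $H$ is precisely what lets us pick a single $N$ that simultaneously pushes every syllable $x_i$ out of its corresponding $HN$ or $\widebar{H}\widebar{N}$ double coset; without closedness, we could only separate one syllable at a time and would be unable to coordinate the choices.
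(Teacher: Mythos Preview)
Your argument is correct and is precisely the standard Baumslag argument: separate each syllable of a reduced normal form from the edge group using closedness of $H$, intersect the resulting finite-index normals, and pass to an amalgam of finite groups, which is virtually free and hence residually finite. The paper does not actually spell out a proof of this lemma---it merely records it as a special case of Baumslag's result and gives a reference---so your proof is exactly the argument the citation points to, specialized to doubles (where the folding retraction $Q*_K\widebar{Q}\to Q$ even gives a one-line reason the finite amalgam is virtually free). One tiny quibble: in your final sentence you say ``double coset,'' but you mean the subgroup $HN$ (equivalently the coset $HN\cdot 1$); this is cosmetic and does not affect the argument.
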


If $H_{\s}$ is a closed subgroup of a free group $F$, then since $F$ is residually finite, $G_{\s}$ is also residually finite 
by Lemma~\ref{doublerf}, since $G_{\s}$ is the double of $F$ along $H_{\s}$.  Hence, 
it suffices to prove Lemma~\ref{lem:Closed} to show that $G_{\s}$ is residually finite.
The proof  uses the graphical viewpoint on subgroups of free groups
as popularized in combinatorial group theory by
Stallings  \cite{MR695906} and \cite[Rem.3.6]{danigroup}.  

\begin{defin}
	The \emph{based core at $x$} of a connected graph $B$  is the smallest connected subgraph containing $x$
	and all closed cycles of $B$.
\end{defin}

\begin{lem}\label{lem:Closed}
	If $\s$ is multiplying  then $H_{\s}$ is a closed subgroup of $F$.
\end{lem}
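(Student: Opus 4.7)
The plan is to use Stallings' graphical framework to show $H_\s$ is closed by producing, for each $g\in F\setminus H_\s$, a finite-index subgroup of $F$ containing $H_\s$ but not $g$. I would first identify the Stallings core graph $\Gamma$ of $H_\s$: folding the defining generators $b^n a^{s_n} b^{-n}$ produces a folded labeled graph consisting of a $b$-labeled ray $v_0\to v_1\to v_2\to\cdots$ carrying an $a$-labeled cycle of length $s_n$ attached at each $v_n$. The graph is automatically folded, because each ray vertex has four distinctly labeled directed edges while each interior cycle vertex carries only $a^{\pm1}$-edges.

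Given a reduced $g\in F\setminus H_\s$, reading $g$ in $\Gamma$ from $v_0$ either completes at some vertex $v\neq v_0$, or stalls because $\Gamma$ lacks the requested outgoing edge; the stall can occur only at $v_0$ missing $b^{-1}$ or at an interior $a$-cycle vertex missing $b^{\pm1}$. Either way, the walk visits only finitely many vertices; let $N$ bound the maximum level reached. I would then build a finite cover $X_g$ of the rose with two petals so that the truncation $\Gamma_N$ (levels $0$ through $N$) embeds as a labeled subgraph and each generator of $H_\s$ is a loop at $(0,0)$. The natural template takes $V(X_g)=\{(n,k):0\le n\le N,\ k\in\Z/s_n\Z\}$, possibly augmented by extra vertices, with $a$ rotating each level's cycle and $b$ sending $(n,k)\mapsto(n+1,k)$ for $n<N$ using the inclusion $\Z/s_n\hookrightarrow\Z/s_{n+1}$ provided by the divisibility $s_n\mid s_{n+1}$, then closing up the missing $b$-edges via a bijection. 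To see $H_\s\le K_g:=\pi_1(X_g,(0,0))$, I would check that $b^n a^{s_n}b^{-n}$ fixes $(0,0)$: for $n\le N$ this is built into $\Gamma_N$, while for $n>N$ the $b^n$-walk lands at some vertex whose $a$-orbit length divides $s_n$ (this is exactly where $s_r\mid s_n$ for $r\le n$ is essential), forcing $a^{s_n}$ to fix it and $b^{-n}$ to return to the basepoint.

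The main obstacle is ensuring that $g\notin K_g$. In the completing case this is automatic for any cover containing $\Gamma_N$, since the walk of $g$ in $X_g$ coincides with its walk in $\Gamma$ and ends at $v\neq(0,0)$. The stalling case is the crux: in $X_g$ the walk continues past the stall through one of the added closure edges, and without care the continuation can loop back to the basepoint through the combinatorial structure of the closure. A naive cyclic closure at level $N$ already fails, for instance, for a word like $b^{-1}abab^{-1}$ whose walk reenters $\Gamma_N$ at a low level and returns. The resolution is to choose the closure adaptively to $g$: insert a sufficiently long tail of dummy vertices, with $a$-action of cycle length dividing every $s_n$ (so that $H_\s$'s loops still close), before rejoining the basepoint orbit, and route the extra edges through this tail so that whenever the tail is longer than $|g|$ the continuation of $g$'s walk cannot reach $(0,0)$. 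Packaging this case analysis uniformly in $g$ is the technical heart of the argument.
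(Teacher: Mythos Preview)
Your strategy is workable but takes a noticeably harder route than the paper. The paper never constructs finite covers. Instead it exhibits $H_\s$ as the intersection of \emph{finitely generated} subgroups
\[
S_k \;=\; \langle\, b^k,\ b^j a^{s_j} b^{-j} : 0\le j<k \,\rangle,
\]
and then simply cites Marshall Hall's theorem that finitely generated subgroups of free groups are closed. The based core of $S_k$ is exactly your $\Gamma_{k-1}$ with the $b$-ray bent into a length-$k$ cycle; since $S_k$ need not have finite index, there is no completion step and your ``crux'' never arises. The containment $H_\s\subseteq S_k$ is then a one-line computation: writing $m=qk+r$ one has $b^m a^{s_m}b^{-m}=(b^k)^q(b^r a^{s_r}b^{-r})^{s_m/s_r}(b^{-k})^q\in S_k$. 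For $\bigcap S_k\subseteq H_\s$ the paper argues, for $k$ with $k>2|w|$ and $s_{k-|w|}>|w|$, that an immersed loop of length $|w|$ in the core of $S_k$ cannot touch the $a$-cycles at the far side of the $b$-cycle nor cross its bottom arc, and hence already lies in the core of $H_\s$.

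What your approach buys is an explicit finite-index witness for each $g$, at the cost of the completion problem you flag. Your resolution is essentially right, but the sketch understates the bookkeeping: there are many missing $b$-edges in $\Gamma_N$ (two at every interior $a$-cycle vertex, plus the ends), and a single tail cannot absorb all of them; one must thread the closure so that every $b$-path from an exit of $\Gamma_N$ back to an entrance passes through at least $|g|$ dummy vertices. This can be done, for example by making $b$ a single long cycle that visits the $\Gamma_N$-vertices separated by blocks of $a$-fixed blanks, and then your observation that every $a$-orbit length in the cover divides $s_N\mid s_n$ for $n>N$ does guarantee $H_\s\le K_g$. So the argument closes, but the paper's indirect route through finitely generated $S_k$ is both shorter and avoids exactly the case analysis you identify as the technical heart.
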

\begin{proof}
	As  above, every finitely generated subgroup of a free group is closed.  Hence, it suffices to show that $H_{\s}$ is the intersection of a sequence of finitely generated subgroups of $F$.  
	
	For each $k\geq 0$, define $S_k\leq F$ as follows:
	$$S_k\ =\ \langle b^k, b^ja^{s_j}b^{-j}:0\leq j<k\rangle.$$
	
To show $H_{\s} \ \sseq \ \bcap S_k$, it suffices to show that $b^ma^{s_m}b^{-m}\in S_k$ for  $m,k\geq 0$.
Fix $m, k\geq 0$.  If $m<k$, then $b^ma^{s_m}b^{-m}\in S_k$ by definition.  If $m\geq k$, let $m=qk+r$ for integers $q,r\geq 0$ with $0\leq r<k$.  Let $f=s_m / s_r$.  Since $\s$ is multiplying, $f$ is a positive integer.  Thus: 
	\begin{align*}
		b^ma^{s_m}b^{-m}\ &=\ b^{qk+r}a^{s_rf}b^{-qk-r}
		=\ (b^k)^q (b^ra^{s_r}b^{-r})^{f} (b^{-k})^q\ \in\ S_k.
	\end{align*}

	It remains to show that $\bcap S_k \sseq H_{\s}$.  
	Consider $w\in \bcap S_k$.  Since $\lim s_n= \infty$, we may choose $k$ with $k>2|w|$ and $s_{k-|w|}>|w|$.  The based core $C$ of the covering space associated with $S_k$ is shown in Figure~\ref{Hk_basedcore}.  The graph $C$ is the union of a based length~$k$ cycle of $b$-edges, and a length~$s_i$ cycle of $a$-edges attached at its $i$-th vertex for $0\leq i<k$.
	
	\begin{figure}[h]
		\centering
		\includegraphics[width=0.5\textwidth]{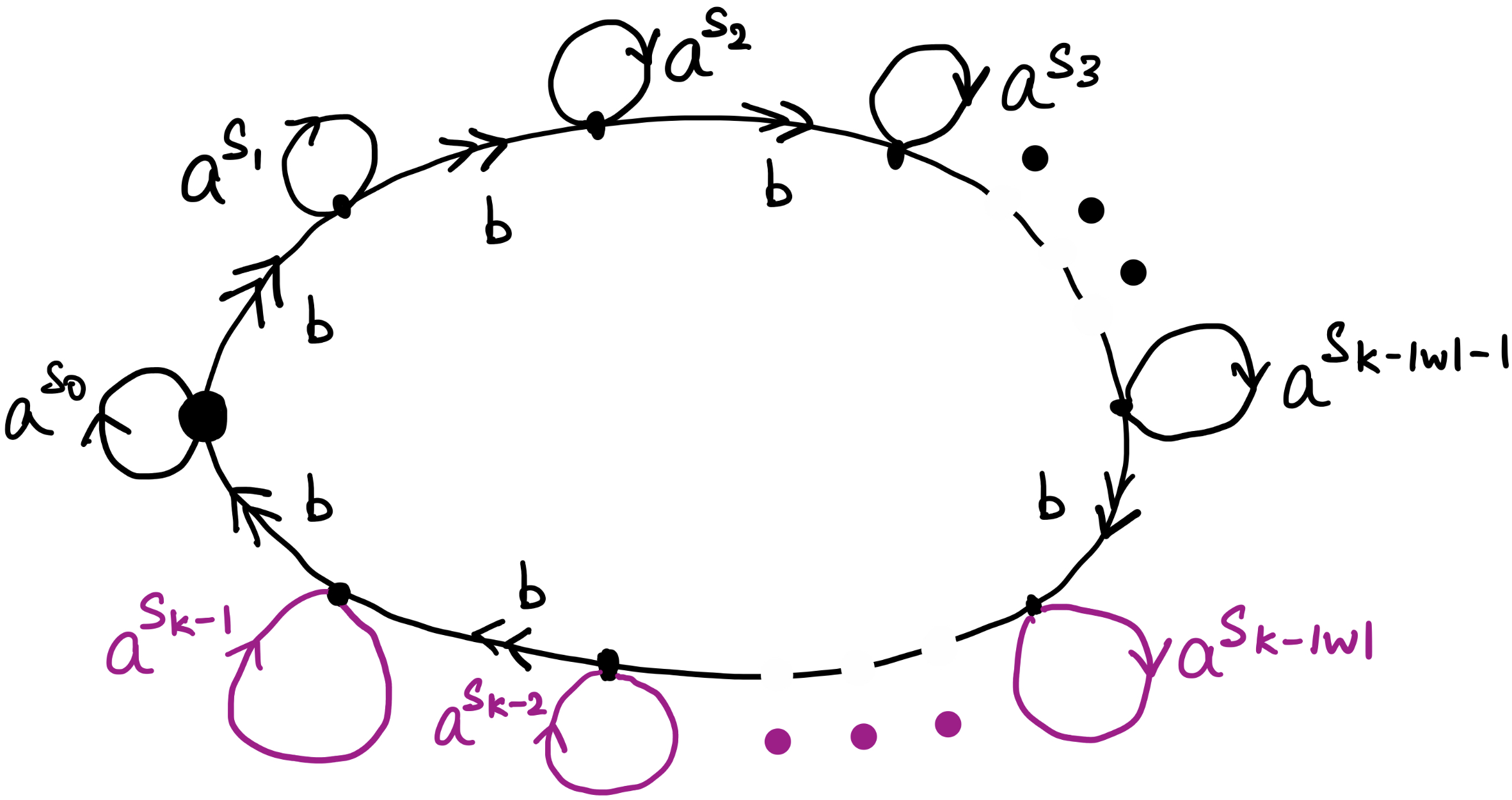}
		\caption[The based core $C$ of the covering space associated with the subgroup $S_k$]{The core $C$ of the based covering space associated with $S_k$. }
		\label{Hk_basedcore}
	\end{figure}
	
	Let $\omega$ be the immersed loop in $C$ labelled by $w$.  Then, $\omega$ is shorter than the $a^{s_{k-m}}$ (purple) loops for $1\leq m\leq |w|$, because $s_{k-|w|}>|w|$.  Thus,  $\omega$ cannot traverse an edge of those (purple) loops.  Therefore, $\omega$ is  loop immersed in the  subgraph of $C$ obtained by removing those (purple) loops.  
	
	Moreover, $\omega$ cannot traverse an edge of $b^{|w|}$ at the bottom of the graph since $\omega$ cannot backtrack.  
	
	Therefore, $\omega$ is immersed in the subgraph $C'$ obtained by discarding edges which $\omega$ cannot traverse as explained above. See Figure~\ref{Hk_basedcore_refined}.  Since $C'$ is a based subgraph of the based core of the covering space associated with $H_{\s}$, we see that $w\in H_{\s}$.  

\end{proof}
	
\begin{figure}[h]
	\centering
	\includegraphics[width=0.6\textwidth]{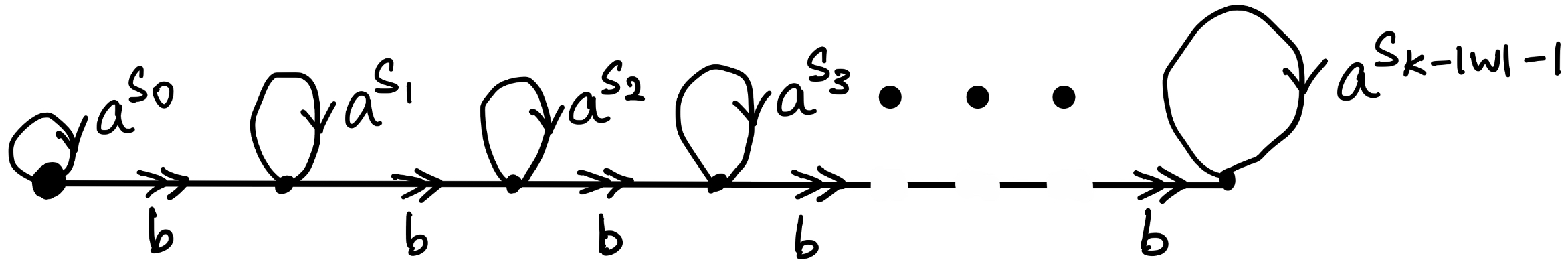}
	\caption[Graph $C'$ where the loop $\omega$ could possibly immerse]{The based graph $C'$ after removing the (purple) loops $a^{s_{k-m}}$ for $1\leq m\leq |w|$ and the bottom $b^{|w|}$ path in Figure~\ref{Hk_basedcore}. The base point is bold.}
	\label{Hk_basedcore_refined}
\end{figure}

\begin{rem}
It is not difficult to show that $G_\s$ is residually a finite $p$-group if and only if all elements of $\s$ are powers of $p$.

Furthermore, $G_\s$ fails to be residually finite without the requirement that $\s$ is nonbounded.
\end{rem}

\section{The homologies of quotients of $G_\s$}
\label{sec:homology}

We first prove a simple homological lemma.  Let $\Z_n = \Z/n\Z$.

\begin{lem}\label{Homology}
	Let $\s$ be multiplying.  Let $m$ be a positive integer. 
Let $\widehat G_{\s}=G_{\s}/\langle\!\langle a^{m},\widebar{a}^{m}\rangle\!\rangle$.
Then $\Homology_1(\widehat G_{\s})\cong \Z^2\times \Z_{m}\times \Z_{\lfrf{m,s_0}}$ and $\Homology_2(\widehat G_{\s})\cong \bigoplus_{j=1}^\infty \Z_{m / \lfrf{m,s_j}}$, where $\lfrf{a,b} = \gcd(a,b)$.
\end{lem}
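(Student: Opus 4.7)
The plan is to use the amalgamated product decomposition $\widehat G_\s \cong \widehat F *_{\widehat H_\s} \widehat{\widebar{F}}$, where $\widehat F = F/\langle\!\langle a^m\rangle\!\rangle \cong \Z_m * \Z$ (with $\widehat{\widebar{F}}$ the analogous quotient), and $\widehat H_\s$ denotes the image of $H_\s$.  The Mayer--Vietoris sequence for this amalgamation is
\[
\cdots \to H_2(\widehat H_\s) \to H_2(\widehat F)^{\oplus 2} \to H_2(\widehat G_\s) \to H_1(\widehat H_\s) \xrightarrow{\phi} H_1(\widehat F)^{\oplus 2} \to H_1(\widehat G_\s) \to 0.
\]
Since $H_n(A*B) \cong H_n(A) \oplus H_n(B)$ in positive degrees and $H_2(\Z_m) = 0$, we have $H_1(\widehat F) \cong \Z_m \oplus \Z$ and $H_2(\widehat F) = 0$.

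The main obstacle is identifying $\widehat H_\s \cong \bigast_{n \geq 0} \Z_{d_n}$ as a free product of cyclic groups, where $d_n := m/\lfrf{m, s_n}$ is the order in $\widehat F$ of the generator $x_n := b^n a^{s_n} b^{-n}$ (which is conjugate to $a^{s_n}$).  From this identification, $H_1(\widehat H_\s) \cong \bigoplus_{n \geq 0} \Z_{d_n}$ and $H_2(\widehat H_\s) = 0$.  I would establish the free-product structure by applying Bass--Serre theory to the action of $\widehat H_\s$ on the Bass--Serre tree $T$ of $\widehat F$: the edge stabilizers of $T$ are trivial, each $x_n$ is elliptic fixing the $A$-type vertex $b^n v_A$, and these vertices are distinct.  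A standard normal-form argument in $\Z_m * \Z$ then verifies $\widehat H_\s \cap b^n A b^{-n} = \langle x_n\rangle$, so the induced graph-of-groups decomposition of $\widehat H_\s$ with trivial edge groups is the claimed free product.  Alternatively, Kurosh's subgroup theorem immediately yields $H_2(\widehat H_\s) = 0$, bypassing the precise identification (which is only needed to pin down $H_1$).

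With these inputs, the Mayer--Vietoris sequence collapses to
\[
0 \to H_2(\widehat G_\s) \to \bigoplus_{n \geq 0} \Z_{d_n} \xrightarrow{\phi} (\Z_m \oplus \Z)^2 \to H_1(\widehat G_\s) \to 0,
\]
with $\phi(g_n) = \bigl((s_n, 0),\, (-s_n, 0)\bigr)$.  Since $s_0$ divides every $s_n$, the image of $\phi$ is the cyclic subgroup $\langle (s_0, -s_0)\rangle \subset \Z_m \oplus \Z_m$ of order $d_0$, and a change of basis identifies the cokernel as $\Z^2 \oplus \Z_m \oplus \Z_{\lfrf{m, s_0}}$, yielding the stated form of $H_1(\widehat G_\s)$.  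For the kernel, observe that $\phi$ restricted to the $\Z_{d_0}$-summand is already an isomorphism onto the image of $\phi$; the splitting lemma then gives $\bigoplus_{n \geq 0} \Z_{d_n} \cong \Z_{d_0} \oplus \ker(\phi)$, hence $H_2(\widehat G_\s) \cong \ker(\phi) \cong \bigoplus_{n \geq 1} \Z_{d_n}$, as claimed.
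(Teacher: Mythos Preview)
Your proposal is correct and follows essentially the same strategy as the paper: Mayer--Vietoris for the amalgam $\widehat G_\s \cong \widehat F *_{\widehat H_\s}\widehat{\widebar F}$, together with the identification $\widehat H_\s \cong \bigast_{n\geq 0}\Z_{d_n}$ and $H_2(\widehat F)=0$. The only substantive point you skip is justifying that $\widehat G_\s$ really inherits this amalgamated-product structure (quotients of amalgams are not automatically amalgams); the paper handles this explicitly via mutually inverse maps, and you should add a line comparing presentations or invoking the retractions $G_\s\to F$, $G_\s\to\widebar F$. Your alternative computations---$H_2(\widehat F)=0$ via $H_n(A*B)\cong H_n(A)\oplus H_n(B)$ rather than an aspherical complex, $\widehat H_\s$ via Bass--Serre theory rather than normal forms, and $H_1(\widehat G_\s)$ as $\operatorname{coker}\phi$ rather than by direct abelianization---are all valid and arguably cleaner than the paper's choices.
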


\newcommand{\ab}{\textup{\textsf{ab}}}

\begin{proof}
	Recall that $G_{\s}=F*_{H_{\s}= \widebar{H}_{\s}}\widebar{F}$.  Images of subgroups in $G_{\s}\to\widehat G_{\s}$ are hatted. 
	\begin{align}
	\Homology_1(\widehat{G}_{\s})\ \cong \ (\widehat{G}_{\s})_\ab \ &=\ \langle a,b,\widebar{a},\widebar{b}\mid H_{\s}=\widebar{H}_{\s}, \ a^{m},\widebar{a}^{m}\rangle_\ab\nonumber\\ 
	\ &=\ \langle a,b,\widebar{a},\widebar{b}\mid a^{s_0}=\widebar{a}^{s_0},a^{m},\widebar{a}^{m}\rangle_\ab\nonumber\\
	\ &=\ \langle a,b,\widebar{a},\widebar{b}\mid a^{\lfrf{m,s_0}}=\widebar{a}^{\lfrf{m,s_0}},a^{m},\widebar{a}^{m}\rangle_{\ab}\nonumber\\
	\ &\cong\  \Z^2\times \Z_{m}\times \Z_{\lfrf{m,s_0}}\nonumber\\
	\Homology_1(\widehat{F})\ =\ (\widehat{F})_\ab \ &=\  \langle a,b\mid a^{m}\rangle_\ab\ \cong\  \Z \times \Z_{m} \label{eq:H1F}
	\end{align}
	
We shall compute $\Homology_2(\widehat G_{\s})$  via the Mayer{\textendash}Vietoris sequence for amalgamated products. 
Firstly, we claim $\widehat G_\s$ decomposes as a double as follows:
	\begin{itemize}
	\item[$-$] $\widehat G_{\s}  \ \cong  \ Q *_{K=\widebar{K}}\widebar{Q}$, where $Q=F/\langle\!\langle a^{m}\rangle\!\rangle = \widehat F$ and $\widebar{Q}=\widebar{F}/\langle\!\langle \widebar{a}^{m}\rangle\!\rangle = \widehat{\widebar{F}}$
	\item[$-$] and where  $K = \operatorname{im}(H_\s)$ under $F\to Q$,
	and $\widebar{K} = \operatorname{im}(\widebar{H}_\s)$ under $\widebar{F}\to \widebar{Q}$. 
	\end{itemize} 
	And hence  $K\to \widehat{H}_\s$ and $\widebar{K}\to \widehat{\widebar{H}}_\s$ are isomorphisms.
	
	The map $\widehat G_{\s}\to Q *_{K=\widebar{K}}\widebar{Q}$ is induced by $F\to Q$ and $\widebar{F}\to \widebar{Q}$.  The retraction maps $G_{\s}\to F$ and $G_{\s}\to \widebar{F}$ project to retractions $\widehat{G}_{\s}\to Q$ and $\widehat{G}_{\s}\to \widebar{Q}$.  Moreover, the retractions $G_{\s}\to F$ and $G_{\s}\to \widebar{F}$ projects to $\widehat{G}_{\s}\to \widehat{F}$ and $\widehat{G}_{\s}\to \widehat{\widebar{F}}$, respectively.  The inclusions $Q\hookrightarrow \widehat{G}_{\s}$ and $Q\hookrightarrow \widehat{G}_{\s}$ induce $Q *_{K=\widebar{K}}\widebar{Q}\to \widehat G_{\s}$, which provides the inverse. Thus $\widehat G_{\s}\cong Q *_{K=\widebar{K}}\widebar{Q}$. 

	To calculate $\Homology_1(\widehat H_\s)$, we first determine $\widehat H_\s$
	through the following sequence of isomorphisms. 
The first holds by declaring $\hat a,\hat b$ to be the images of $a,b$. 
The second holds by the Normal Form Theorem for free products \cite[Thm.IV.1.2]{LS77}.
The third holds for each factor as $s_j$ has order  $m / \lfrf{s_j,m}$ in $\Z/m\Z$.
\begin{equation}\label{eq:LabelOfFreeProduct}
	\widehat{H}_{\s}
	\ \ = \ \
	\langle \hat b^{j} \hat  a^{s_j} \hat  b^{-j}\colon j\geq 0\rangle  
	\ \ \cong \ \
	\bigast_{j=0}^\infty \langle \hat b^{j} \hat  a^{s_j}  \hat b^{-j} \rangle
	\ \ \cong \ \
	\bigast_{j=0}^\infty \Z_{m / \lfrf{s_j,m}}.
\end{equation}
%
%
Therefore, as homology is summable over free products we have:
	\begin{equation}\label{eq:H1Hs}
\Homology_1(\widehat{H}_{\s})
\ \cong \
 \bigoplus_{j=0}^\infty \Z_{m / \lfrf{m,s_j}}.
	\end{equation}
	
	To compute  $\Homology_2(\widehat F)$ we  construct an aspherical complex $X$ with $\pi_1 X \cong \widehat F$, so $\Homology_2(\widehat F)=\Homology_2(X)$ \cite[Prop.2.4.1]{aspherical}.  Let $X^2$ be the standard 2-complex
	of $\langle a,b \mid b^m \rangle$.
The aspherical complex $X$ is formed by adding appropriate $n$-cells to $X^2$ for $n\geq 3$.  
%
%
	
$\partial_2: \Chain_2(X)\to \Chain_1(X)$ is injective since the attaching map of the 2-cell has degree~$m$.
Thus
	\begin{equation}\label{eq:H2F}
		\Homology_2(\widehat F) \ =\ \Homology_2(X)\ =\ \frac{\ker \partial_2}{\operatorname{im} \partial_3}\ =\ 0.
	\end{equation}
	
	The following exact sequence is deduced from the Mayer{\textendash}Vietoris sequence for amalgamated products \cite[Prop.2.7.7]{aspherical}. 
	$$\Homology_2(\widehat F)\oplus \Homology_2(\widehat{\widebar{F}})\ \longrightarrow\ 
	\Homology_2(\widehat G_{\s})\ \longrightarrow \ 
	\Homology_1(\widehat H_{\s})\ \longrightarrow \ 
	\Homology_1(\widehat F)\oplus \Homology_1(\widehat{\widebar{F}})$$

	Applying Equations~\eqref{eq:H1F}, \eqref{eq:H1Hs} and \eqref{eq:H2F}, we  obtain the following exact sequence.
	$$0\ \longrightarrow \ 
	\Homology_2(\widehat G_{\s})\  \longrightarrow \ 
	\bigoplus_{j=0}^\infty \Z_{m / \lfrf{m,s_j}}
 	\ \overset{\phi}{\longrightarrow} \ 
 	(\Z \times \Z_{m})^2$$
	
	The map $\phi\colon \Homology_1(\widehat H_{\s})\to \Homology_1(\widehat F)\oplus \Homology_1(\widehat{\widebar{F}})$ is induced by $\widehat H_{\s}\hookrightarrow \widehat F$ and $ \widehat{\widebar{H}}_{\s}\hookrightarrow \widehat{\widebar{F}}$.  
	By Equation~\eqref{eq:LabelOfFreeProduct}, $\phi([\hat b^{j} \hat  a^{s_j}  \hat b^{-j}]) = ([\hat a^{s_j}],[\hat{\bar{a}}^{s_j}])$.  Since $\{[\hat b^{j} \hat  a^{s_j}  \hat b^{-j}]\colon j\geq 0\}$ generates $\Homology_1(\widehat H_{\s})$, $\text{im}(\phi)$ equals $\lfrf{m,s_0}\Z / m\Z\cong \Z_{m / \lfrf{m,s_0}}$ as $s_0\mid s_j$ for $j\geq 0$.
%
%
	Therefore, by the first isomorphism theorem, we reach the conclusion:
	\begin{equation*}
		\Homology_2(\widehat G_{\s})
		\ =\ \ker \phi 
		\ \cong\ \frac{\bigoplus_{j=0}^\infty \Z_{m / \lfrf{m,s_j}}}{\Z_{m / \lfrf{m,s_0}}} 
		\ =\ \bigoplus_{j=1}^\infty \Z_{m / \lfrf{m,s_j}}.\qedhere
	\end{equation*}
\end{proof}

\section{Canonical Subgroups and pairwise non-isomorphism}\label{sec:nonisomorphic}
We shall first identify certain canonical subgroups of $G_{\s}$.
We then use this to show that $G_{\s} \not\cong G_{\t}$  for multiplying sequences $\s\neq \t$. 

\begin{defin}
	Element $g\in G$ is \emph{loose} if $C=\langle g\rangle$ is a maximal cyclic subgroup and there exists a maximal cyclic subgroup $C'$ such that $C \cap C'$ is a proper finite index subgroup of $C$.
\end{defin}


\begin{prop}\label{prop:main}
If  $\s \neq (1)$,
then $g\in G_{\s}$ is loose if and only if $g$ is conjugate to $a^{\pm1}$ or $\widebar{a}^{\pm1}$.
\end{prop}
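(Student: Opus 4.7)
The plan is to analyze both directions using the Bass-Serre tree $T$ of $G_\s = F *_{H_\s} \widebar{F}$ together with the explicit Stallings graph of $H_\s$: a $b$-ray $v_0, v_1, v_2, \ldots$ with an $a$-cycle of length $s_i$ attached at each $v_i$. Throughout I would interpret $\s \neq (1)$ as the requirement $s_0 \geq 2$.

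For the easy direction it suffices to show $a$ is loose, as looseness is conjugation-invariant and the $\widebar{a}$ case is symmetric. The subgroup $\langle a\rangle$ is maximal cyclic in $G_\s$ because any $n$-th root of $a$ in $G_\s$ is elliptic in $T$ and must fix the unique vertex of $T$ that $a$ fixes, namely $v_F$, hence lies in $F$, where $a$ is a root; similarly $\langle \widebar{a}\rangle$. The Stallings graph reads off $\langle a\rangle \cap H_\s = \langle a^{s_0}\rangle$, and the amalgam normal form then gives $\langle a\rangle \cap \langle \widebar{a}\rangle = \langle a^{s_0}\rangle$, of proper finite index $s_0$ in $\langle a\rangle$.

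For the main direction let $g$ be loose with witness $h$: $g^n = h^m$ with $n \geq 2$ and $\langle g\rangle \neq \langle h\rangle$ both maximal cyclic. I would first rule out $g$ hyperbolic in $T$: then $h$ is hyperbolic on the same axis, and a short computation shows the axis-stabilizer in this amalgam is cyclic, forcing $\langle g\rangle = \langle h\rangle$. So $g$ is elliptic, and after conjugating I may assume $g \in F$. The witness $h$ is elliptic as well; the case that $h$ is $F$-conjugate into $F$ itself is excluded because distinct maximal cyclic subgroups of the free group $F$ meet trivially. In the remaining cases $g^n$ lies in $F \cap u V u^{-1}$ for some $V \in \{F, \widebar{F}\}$ and $u \notin F$; such an intersection sits inside an edge stabilizer and hence inside an $F$-conjugate of $H_\s$. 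Conjugating $g$ within $F$, I may arrange $g^n \in H_\s$. A brief check---showing that if $g \in H_\s$ is a root in $F$ then it is automatically a root in the free group $H_\s$, whence $C_{G_\s}(g)$ collapses to $\langle g\rangle$ and $g$ cannot be loose---further rules out $g$ lying in any $F$-conjugate of $H_\s$.

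The central step, and the main obstacle, is the combinatorial claim that a root $g \in F$ with $g^n \in H_\s$ for some $n \geq 2$ and $g$ in no $F$-conjugate of $H_\s$ is $F$-conjugate to $a^{\pm 1}$. I would cyclically reduce $g = u g_c u^{-1}$, so that the walk $g_c^n$ is a reduced closed walk in the Schreier coset graph of $H_\s$ based at $v_0 u$; since a reduced closed walk cannot enter a pendant tree of the Schreier graph without backtracking, the entire walk lies in the Stallings core. Decomposing $g_c$ into alternating $a^{\pm 1}$- and $b^{\pm 1}$-runs, I would exploit the rigid structure of the core---every $b$-run must start and end at a hub $v_i$, each enclosed $a$-run has exponent divisible by the relevant $s_i$, and no $b^{-1}$ step is available at $v_0$---to track the $b$-position through one period of $g_c^n$. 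These constraints force the net $b$-exponent of $g_c$ to vanish, which in turn forces $v_0 u \cdot g_c = v_0 u$, i.e.\ $g_c$ (and hence $g$) lies in an $F$-conjugate of $H_\s$, contradicting the hypothesis. Hence $|g_c| = 1$; since $b^{\pm 1}$ has no nontrivial power in any $F$-conjugate of $H_\s$, $g_c = a^{\pm 1}$, so $g$ is $F$-conjugate to $a^{\pm 1}$. This run-analysis is where the real work sits; the Bass-Serre reductions above and the hyperbolic sub-case are essentially bookkeeping.
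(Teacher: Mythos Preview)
Your overall Bass--Serre/Stallings-graph strategy for the hard direction mirrors the paper's, but there is a genuine gap stemming from your reading of the hypothesis. You interpret $\s\neq(1)$ as $s_0\geq 2$; in the paper it means only that the sequence is not identically~$1$ (and every multiplying sequence has $\lim s_n=\infty$, so some $s_p>1$ while $s_0=1$ is perfectly allowed---Corollary~\ref{cor:main} explicitly treats the case $s_0=1$). With $s_0=1$ one has $a=\widebar a$ in $G_\s$, so your proposed witnesses $\langle a\rangle$ and $\langle \widebar a\rangle$ coincide and do not exhibit looseness. The paper fixes this by choosing any $p$ with $s_p>1$ and using $C=\langle b^p a b^{-p}\rangle$ and $C'=\langle \widebar b^p \widebar a\,\widebar b^{-p}\rangle$, whose intersection is the proper finite-index subgroup $\langle b^p a^{s_p} b^{-p}\rangle$.

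A second, related issue is your ``brief check'' ruling out $g$ lying in an $F$-conjugate of $H_\s$. You assert that if $g\in H_\s$ is a root in $F$ then $C_{G_\s}(g)=\langle g\rangle$ and hence $g$ is not loose; but looseness only gives an $h$ commuting with some \emph{power} $g^n$, not with $g$ itself, so you would need $C_{G_\s}(g^n)=\langle g\rangle$, which is not what you argue. Worse, when $s_0=1$ the element $a$ itself lies in $H_\s$ and is a root in $F$, yet $a$ \emph{is} loose, so the check is simply false in that regime. The paper sidesteps this entirely: once $g$ and $k$ are elliptic with distinct fixed vertices, one finds along the geodesic a vertex where $g$ lies in the vertex group but not the incident edge group while a power does, reducing directly to the Stallings-graph claim ``$g^r\in H_\s$, $g\notin H_\s$ $\Rightarrow$ $g$ is $F$-conjugate into $\langle a\rangle$''---no separate exclusion of $g\in H_\s$ is needed. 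Your final run-analysis is essentially the paper's, though the implication ``net $b$-exponent $0$ $\Rightarrow$ $g_c$ closes'' still requires the case split on whether $g_c$ ends on the $b$-ray or in the interior of an $a$-loop, which the paper makes explicit.
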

\begin{proof}
Choose $p$ with $s_p>1$.  Let $C = \langle b^pab^{-p}\rangle$ and $C' = \langle \widebar{b}^p\widebar{a}\widebar{b}^{-p}\rangle$.
	Then $C\cap C' = \langle b^pa^{s_p}b^{-p}\rangle = \langle \widebar{b}^p\widebar{a}^{s_p}\widebar{b}^{-p}\rangle$ is a proper finite index subgroup of $C$ and $C'$.  Hence, $b^pab^{-p}$ and $\widebar{b}^p\widebar{a}\widebar{b}^{-p}$ are loose and thus $a$ and $\widebar{a}$ are loose, since looseness is preserved by conjugation.
	
	For the other direction, consider the action of $G_{\s}$ on the Bass{\textendash}Serre tree.  Suppose $\langle g\rangle \neq \langle k\rangle$ are maximal cyclic subgroups with $\langle g\rangle\cap \langle k\rangle$ infinite.  Note that either both are hyperbolic or both are elliptic.  Indeed, if $g$ is hyperbolic, then $k$ is hyperbolic since $k^n\in \langle g\rangle$ for some $n$.  
	
	Suppose $g$ and $k$ are hyperbolic, then since $\langle g\rangle$ and $\langle k\rangle$ are commensurable, they stabilize the same axis in the tree and hence act on the axis in the same way.  By maximality of $\langle g\rangle$ and $\langle k\rangle$, $\langle g\rangle = \langle k\rangle$ which contradicts our assumption.
%
	
Suppose $g$ and $k$ are elliptic.		
	Let $u$ and $v$ be the vertices stabilized by $g$ and $k$, and let $A$ be the geodesic from $u$ to $v$.
	Note that $u\neq v$ since free groups have no loose subgroups.
	Note that the pointwise-stabilizer of  $A$ is $\langle g\rangle \cap \langle k\rangle$. 
	
	Let $A =e_1e_2\cdots e_m$, with vertices $u=u_0,u_1,\ldots,u_m=v$.  Since $g$ stabilizes $u_0$, there exists $\ell$ such that $g$ stabilizes $u_\ell$ but not  $e_{\ell+1}$.  Thus $g$ is in the vertex group associated to $u_\ell$ but not in the edge group associated to $e_\ell$. However $g^n$ is in the edge group associated to $e_\ell$ for some $n>0$.
	
	It thus suffices to show that $H < F$ has the property that if $g^r\in H$ for some $r$ but $g\not\in H$, then $g$ is conjugate into $\langle a\rangle$.  Equivalently, we show that if a path $S$ in the graph associated to $H$ has a power $S^m$ which is closed, then either $S$ is closed or $S$ is a conjugate of a power of $a$.
	
	Observe that the $b$-exponent sum of $S$ equals $0$, for otherwise, no $S^r$ is closed.  Assume without loss of generality that $S$ is cyclically reduced, and moreover, if $S$ is not a power of $a$, then $S$ starts with $b$.  We then deduce that $S$  starts on the $b$ ray, for otherwise $S^r$ embeds in a tree attached to the interior of an $a$-loop.  If $S$ ends on the $b$ ray, then $S$ is closed since the $b$-exponent sum of $S$ is $0$.  Finally, if $S$ ends in the interior of an $a$-loop, then as above, $S^{r-1}$ embeds in a tree attached to the interior of an $a$-loop, so $S^r$ is not closed.
\end{proof}

\begin{cor}\label{cor:main}
	Each  $G_{\s}$ is finitely generated and residually finite.  Moreover,
	$G_{\s}\not\cong G_{\t}$	when $\s\neq \t$.
\end{cor}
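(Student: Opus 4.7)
The plan is to prove the three claims in order, with the bulk devoted to non-isomorphism. Finite generation is immediate since $G_\s$ is generated by $\{a, b, \widebar{a}, \widebar{b}\}$, and residual finiteness follows by combining Lemma~\ref{lem:Closed} (which identifies $H_\s$ as a closed subgroup of the residually finite group $F$) with Lemma~\ref{doublerf}.

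For non-isomorphism, I would extract the canonical-subgroup viewpoint suggested by Proposition~\ref{prop:main}. The property of being loose is defined purely in terms of maximal cyclic subgroups, so any isomorphism $\phi: G_\s \to G_\t$ carries the set $L_\s$ of loose elements of $G_\s$ bijectively onto $L_\t$. Since multiplying sequences satisfy $\lim s_n = \infty$ and in particular $\s \neq (1)$, Proposition~\ref{prop:main} applies: $L_\s$ consists exactly of the conjugates of $\{a^{\pm 1}, \widebar{a}^{\pm 1}\}$, and similarly for $L_\t$. The set $\{g^m : g \in L_\s\}$ is conjugation-invariant, so the subgroup it generates equals its normal closure, which is precisely $N_m := \langle\!\langle a^m, \widebar{a}^m \rangle\!\rangle$. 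Hence $\phi$ descends to an isomorphism $\widehat G_\s \cong \widehat G_\t$ and induces isomorphisms of $\Homology_1$ and $\Homology_2$.

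Next, I would use Lemma~\ref{Homology} to read $\s$ off from these invariants. Fix a prime $p$ and take $m = p^N$; writing $v_p$ for $p$-adic valuation, $m/\gcd(m, s_j) = p^{\max(0, N - v_p(s_j))}$, and Lemma~\ref{Homology} gives
$$\Homology_2(\widehat G_\s) \ \cong \ \bigoplus_{j=1}^\infty \Z_{p^{\max(0,\, N - v_p(s_j))}}.$$
This is a direct sum of cyclic $p$-groups, and its isomorphism type as an abelian group determines the multiset $\{v_p(s_j) : j \geq 1,\ v_p(s_j) < N\}$; similarly, the torsion of $\Homology_1(\widehat G_\s) \cong \Z^2 \times \Z_m \times \Z_{\gcd(m, s_0)}$ recovers $\min(N, v_p(s_0))$.

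Finally, letting $N \to \infty$ yields the complete multiset $\{v_p(s_j) : j \geq 0\}$, and since $s_j \mid s_{j+1}$ forces $v_p(s_j)$ to be non-decreasing in $j$, the ordered sequence $(v_p(s_j))_{j\geq 0}$ is recovered from its multiset. Ranging over every prime $p$ then reconstructs each $s_j$ and hence $\s$ itself, so $\s \neq \t$ forces $G_\s \not\cong G_\t$. The main obstacle in this plan is the intrinsic identification of $N_m$; once Proposition~\ref{prop:main} is in hand and one notes that looseness is conjugation-invariant, everything else reduces to a routine arithmetic bookkeeping with the homology from Lemma~\ref{Homology}.
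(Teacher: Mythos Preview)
Your proposal is correct and shares the paper's overall architecture: use Proposition~\ref{prop:main} to show that any isomorphism $\phi\colon G_\s\to G_\t$ carries $\langle\!\langle a^m,\widebar a^m\rangle\!\rangle$ to $\langle\!\langle a'^m,\widebar a'^m\rangle\!\rangle$, then distinguish via Lemma~\ref{Homology}. The execution, however, differs in two noteworthy ways. First, your identification of $N_m$ as the subgroup generated by the conjugation-invariant set $\{g^m:g\in L_\s\}$ is cleaner than the paper's argument, which separately treats $s_0=1$ and $s_0>1$ and invokes the abelianization to track where $\phi(a)$ and $\phi(\widebar a)$ land individually; your route bypasses that entirely. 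Second, and more substantially, the paper distinguishes $G_\s$ from $G_\t$ by locating the first index $k$ where $\s$ and $\t$ differ and taking the single value $m=s_k$, then comparing $\Homology_1$ (if $k=0$) or $\Homology_2$ (if $k\geq1$) directly. You instead reconstruct the entire sequence $\s$ from the isomorphism type by letting $m=p^N$ range over all prime powers and reading off the $p$-adic valuations $v_p(s_j)$ from the Ulm invariants of the resulting $p$-primary homology groups. The paper's approach is quicker and needs only a single well-chosen $m$; yours establishes the stronger statement that $\s\mapsto G_\s$ is injective by exhibiting an explicit inverse, at the cost of a bit more arithmetic bookkeeping (and a tacit appeal to the classification of countable direct sums of cyclic $p$-groups to recover the multiset of exponents).
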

\begin{proof}
Let $\s$ and $\t$ be multiplying sequences.  Suppose $\phi\colon G_{\s}\to G_{\t}$ is an isomorphism.
 Denote generators for $G_{\s}$ as $a,b,\widebar{a},\widebar{b}$ and that for $G_{\t}$ as $a',b',\widebar{a}',\widebar{b}'$.  Denote $A_m= \langle\!\langle a^{m},\widebar{a}^{m}\rangle\!\rangle\sseq G_{\s}$ and $A_m' = \langle\!\langle a^{m},\widebar{a}^{m}\rangle\!\rangle\sseq G_{\t}$.  
	
	By Proposition~\ref{prop:main}, loose elements $a^{\pm1},\widebar{a}^{\pm1}$ can be mapped to conjugates of $(a')^{\pm1}$ or $(\widebar{a}')^{\pm1}$.  For $s_0=1$, since $a=\widebar{a}$, so $\phi(A_m)= A_m'$ for all $m$.  For $s_0>1$, by abelianization of $G_{\s}$ and $G_{\t}$, $\phi(a)$ is conjugate to $(a')^{\pm1}$ and $\phi(\widebar{a})$ is conjugate to $(\widebar{a}')^{\pm1}$; or $\phi(a)$ is conjugate to  $(\widebar{a}')^{\pm1}$ and $\phi(\widebar{a})$ is conjugate to $(a')^{\pm1}$.  Therefore, $\phi(A_m)= A_m'$ for all $m$.
	
	Suppose sequences $\s\neq \t$ first differ at index $k$, i.e.\ $s_j = t_j$ for $j<k$ and $s_k\neq t_k$.  Without loss of generality let $s_k>t_k$.  Using $s_j\mid s_k$ for $j\leq k$, by Lemma~\ref{Homology}, 
	\begin{align*}
		\Homology_1(G_{\s} / A_{s_k})\ &\cong\ \Z^2\times \Z_{s_k} \times \Z_{s_0}&\Homology_2(G_{\s} / A_{s_k})\ &\cong\ \bigoplus_{j=1}^{k-1} \Z_{s_k / s_j}\\
		\Homology_1(G_{\t} / A_{s_k}')\ &\cong\ \Z^2\times \Z_{s_k} \times \Z_{\lfrf{s_0,t_0}}&\Homology_2(G_{\t} / A_{s_k}')\ &\cong\ \left[\bigoplus_{j=1}^{k-1} \Z_{s_k / s_j}\right] \oplus \left[\bigoplus_{j=k}^\infty \Z_{s_k / \lfrf{s_k,t_j}}\right].
	\end{align*}

	If $k=0$, then $\Z_{\lfrf{s_0,t_0}}=\Z_{t_0}$.  Thus $\Homology_1(G_{\s} / A_{s_k})\not\cong \Homology_1(G_{\t} / A_{s_k}')$.  This is impossible.
	
	If $k\geq 1$, then $\lfrf{s_k,t_k}<s_k$.  Thus $\bigoplus_{j=k}^\infty \Z_{s_k / \lfrf{s_k,t_j}}\not\cong 1$, so $\Homology_2(G_{\s} / A_{s_k})\not\cong \Homology_2(G_{\t} / A_{s_k}')$. This is impossible.
\end{proof}

\section{Most $G_\s$ have unsolvable word problem}\label{sec:wordproblems}

\begin{defin}
	A group $G$ generated by a finite set $S$ has \emph{solvable word problem} if there is a computer program that can determine whether a word in $S$ represents $1_G$.  
	A subgroup $H$ of $G$ has \emph{solvable membership problem} if there is a computer program that determines  whether or not a word in $S$ represents an element in $H$.  
	A sequence $\s$ is \emph{computable} if there is a computer program that outputs $s_n$ for input $n$.  
	\emph{Unsolvable} means not solvable and \emph{uncomputable} means not computable.
\end{defin}

\begin{cor}
	There are uncountably many finitely generated residually finite groups with unsolvable word problem.
\end{cor}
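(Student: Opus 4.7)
The plan is to combine Corollary~\ref{cor:main} with a computability reduction that forces $G_\s$ to have unsolvable word problem whenever $\s$ is uncomputable.

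First I would observe that there are uncountably many multiplying sequences (for instance $s_n = 2^{c_0 + c_1 + \cdots + c_n}$ as $(c_i)$ ranges over $\N^\N$), but only countably many of them are computable; let $\mc{U}$ denote the resulting uncountable set of uncomputable multiplying sequences.

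The heart of the argument is the identity
\[
s_n \ =\ \min\{k \geq 1 : b^n a^k b^{-n} = \widebar{b}^{\,n} \widebar{a}^k \widebar{b}^{-n} \text{ in } G_\s\}.
\]
That $s_n$ satisfies the equation is immediate from $b^{n} a^{s_n} b^{-n} \in H_\s$. For the reverse direction, the Normal Form Theorem \cite[Thm.IV.1.2]{LS77} for amalgams forces $b^n a^k b^{-n} \in H_\s$, and the Stallings-graph description of $H_\s$ (a $b$-ray with an $a$-loop of length $s_j$ attached at its $j$-th vertex, obtained as the direct limit of the based cores for the $S_k$ from Section~\ref{sec:rf}) yields $b^n a^k b^{-n} \in H_\s$ iff $s_n \mid k$. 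Consequently, any algorithm solving the word problem in $G_\s$ with respect to the finite generating set $\{a,b,\widebar{a},\widebar{b}\}$ would compute $s_n$ by a minimum-search in $k$, contradicting $\s \in \mc{U}$.

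Assembling the pieces: each $G_\s$ with $\s \in \mc{U}$ is finitely generated and residually finite by Section~\ref{sec:rf}, has unsolvable word problem by the reduction just described, and different sequences in $\mc{U}$ yield non-isomorphic groups by Corollary~\ref{cor:main}, so the family $\{G_\s : \s \in \mc{U}\}$ contains uncountably many isomorphism classes of groups of the required kind. The main obstacle I anticipate is the clean verification of the equivalence ``$b^n a^k b^{-n} \in H_\s \iff s_n \mid k$'', since Section~\ref{sec:rf} deals only with the finitely generated approximations $S_k$; this can be handled either by passing to a direct limit of the based cores, or by arguing directly that in the covering space corresponding to $H_\s$ the only $a$-cycle based at the vertex $b^n \cdot v_0$ has length a multiple of $s_n$.
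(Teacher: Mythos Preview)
Your proposal is correct and follows essentially the same route as the paper: both reduce computability of $\s$ to the word problem in $G_\s$ via the test word $b^n a^k b^{-n}\,(\widebar{b}^{\,n}\widebar{a}^k\widebar{b}^{-n})^{-1}$ (the paper packages this as $w\phi(w)^{-1}$ for the swapping involution $\phi$, passing through the membership problem for $H_\s$ in $F$), and both finish by counting uncomputable multiplying sequences. You are in fact more explicit than the paper in justifying $b^n a^k b^{-n}\in H_\s \iff s_n\mid k$ and in invoking Corollary~\ref{cor:main} to ensure the resulting groups are pairwise non-isomorphic; the paper leaves these points implicit.
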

\begin{proof}
	Consider groups $G_{\s}$.  We claim that if $\s$ is an uncomputable sequence, then $H_{\s}$ has unsolvable membership problem.  Assume there is a program that determines whether  $(b^{n}a^pb^{-n}) \in H_\s$.
 Then enumerating $p$
	 gives a  program that outputs $s_n$ on input $n$.
	 
	We now show $G_{\s}$ has unsolvable word problem if $H_{\s}$ has unsolvable membership problem.
	   Assume there is a program that determines whether a word is trivial in $G_{\s}$. Consider the involution $\phi\colon G_{\s}\to G_{\s}$ with $\phi(a)=\widebar{a}$ and $\phi(b) = \widebar{b}$.  Then determining whether the word $w\phi(w)^{-1}$ represents $1_{G}$ for any word $w$ in $\{a,b\}$ solves the membership problem for $H$.
	
It thus suffices to prove  there are uncountably many uncomputable multiplying sequences $\s$.
There are  countably many computable sequences since there are countably many (finite) computer programs.
However, there are uncountably many multiplying sequences, e.g.\ let $s_{n+1} / s_n\in \{2,4\}$.
\end{proof}

%

\textbf{Acknowledgement:}  We are grateful to Piotr Przytycki, Dan Segal and the referee for helpful comments.  


\bibliographystyle{alpha}
\bibliography{Citation}

\begin{thebibliography}{Rau21}

\bibitem[BG15]{Benli2015}
Mustafa~G\"{o}khan Benli and Rostislav Grigorchuk.
\newblock Uncountably many 2-generated just-infinite branch pro-2 groups.
\newblock {\em Algebra Discrete Math.}, 19(1):8--18, 2015.

\bibitem[Bro94]{aspherical}
Kenneth~S. Brown.
\newblock {\em Cohomology of groups}, volume~87 of {\em Graduate Texts in
  Mathematics}.
\newblock Springer-Verlag, New York, 1994.
\newblock Corrected reprint of the 1982 original.

\bibitem[Dys64]{DysonNotice}
V.~H. Dyson.
\newblock The word problem and residuallly finite groups.
\newblock {\em Notices Amer. Math. Soc.}, 11:743, 1964.

\bibitem[Dys74]{MR0360843}
Verena~Huber Dyson.
\newblock A family of groups with nice word problems.
\newblock {\em J. Austral. Math. Soc.}, 17:414--425, 1974.
\newblock Collection of articles dedicated to the memory of Hanna Neumann,
  VIII.

\bibitem[Gri84]{MR764305}
R.~I. Grigorchuk.
\newblock Degrees of growth of finitely generated groups and the theory of
  invariant means.
\newblock {\em Izv. Akad. Nauk SSSR Ser. Mat.}, 48(5):939--985, 1984.

\bibitem[Hal49]{Marshallsubgroup}
Marshall Hall, Jr.
\newblock Coset representations in free groups.
\newblock {\em Trans. Amer. Math. Soc.}, 67:421--432, 1949.

\bibitem[Hig61]{MR130286}
G.~Higman.
\newblock Subgroups of finitely presented groups.
\newblock {\em Proc. Roy. Soc. London Ser. A}, 262:455--475, 1961.

\bibitem[KN06]{MR2264720}
Martin Kassabov and Nikolay Nikolov.
\newblock Cartesian products as profinite completions.
\newblock {\em Int. Math. Res. Not.}, pages Art. ID 72947, 17, 2006.

\bibitem[LS77]{LS77}
Roger~C. Lyndon and Paul~E. Schupp.
\newblock {\em Combinatorial group theory}.
\newblock Springer-Verlag, Berlin, 1977.
\newblock Ergebnisse der Mathematik und ihrer Grenz\-gebiete, Band 89.

\bibitem[Mes74]{MR335645}
Stephen Meskin.
\newblock A finitely generated residually finite group with an unsolvable word
  problem.
\newblock {\em Proc. Amer. Math. Soc.}, 43:8--10, 1974.

\bibitem[NS21]{NikolovSegalSoluble}
Nikolay Nikolov and Dan Segal.
\newblock Constructing uncountably many groups with the same profinite
  completion.
\newblock Unpublished, 2021.

\bibitem[Pyb04]{MR2031168}
L\'{a}szl\'{o} Pyber.
\newblock Groups of intermediate subgroup growth and a problem of
  {G}rothendieck.
\newblock {\em Duke Math. J.}, 121(1):169--188, 2004.

\bibitem[Rau21]{rauzy2020obstruction}
Emmanuel Rauzy.
\newblock Obstruction to a {H}igman embedding theorem for residually finite
  groups with solvable word problem.
\newblock {\em J. Group Theory}, 24(3):445--452, 2021.

\bibitem[Seg21]{SegalCentralMetababelian}
Dan Segal.
\newblock Some residually finite groups.
\newblock Unpublished, 2021.

\bibitem[Sta83]{MR695906}
John~R. Stallings.
\newblock Topology of finite graphs.
\newblock {\em Invent. Math.}, 71(3):551--565, 1983.

\bibitem[SW02]{danigroup}
Mark Sapir and Daniel~T. Wise.
\newblock Ascending {HNN} extensions of residually finite groups can be
  non-{H}opfian and can have very few finite quotients.
\newblock {\em J. Pure Appl. Algebra}, 166(1-2):191--202, 2002.

\bibitem[Wis99]{lemmaproof}
Daniel~T. Wise.
\newblock A continually descending endomorphism of a finitely generated
  residually finite group.
\newblock {\em Bull. London Math. Soc.}, 31(1):45--49, 1999.

\end{thebibliography}
\end{document}